\documentclass[12pt]{amsart}
\usepackage[centertags]{amsmath}
\usepackage{amsfonts}
\usepackage{amssymb}
\usepackage[latin5]{inputenc}
\usepackage{amsthm}
\usepackage{mathrsfs}
\usepackage{upgreek}
\usepackage{amsopn}
\usepackage{amscd}
\newtheorem{theorem}{Theorem}[section]
\newtheorem{lemma}[theorem]{Lemma}
\newtheorem{definition}[theorem]{Definition}

\newtheorem{prop}[theorem]{Proposition}
\theoremstyle{definition}

\newtheorem{rem}[theorem]{Remark}

\theoremstyle{remark}

\numberwithin{equation}{section}

\renewcommand{\leq}{\leqslant}
\renewcommand{\geq}{\geqslant}

\renewcommand{\epsilon}{\varepsilon}

\oddsidemargin 5pt \evensidemargin 5pt \marginparwidth 68pt
\marginparsep 10pt \topmargin 10pt \headheight 10pt
\headsep 15pt \footskip 35pt
\textheight = 625pt  \textwidth 440pt
\columnsep 10cm \columnseprule 10cm
\begin{document}
{\setlength{\baselineskip}%
                        {1.3\baselineskip}
                      
\title{Towards a Theory of Unbounded Locally Solid Riesz Spaces}
\author{Z.ERCAN, M.VURAL}
\address{(Zafer Ercan) Department of Mathematics, Abant \.{I}zzet Baysal University, G\"{o}lk\"{o}y Kamp\"{u}s\"{u}, 14280 Bolu, Turkey}
\address{(Mehmet Vural) Department of Mathematics, Abant \.{I}zzet Baysal University, G\"{o}lk\"{o}y Kamp\"{u}s\"{u}, 14280 Bolu, Turkey}
\email{zercan@ibu.edu.tr , m.vural.hty@gmail.com}
\subjclass[2000]{Primary 46B42. Secondary 46A40}

\keywords{abc.}
\maketitle 
\begin{abstract} We introduce the notion of unbounded locally solid Riesz spaces, and investigate its fundamental properties.   
\end{abstract}
\section{Introduction} One of the main convergence types in a Riesz space \footnote{In this paper all Riesz spaces will be assumed Archimedean} is {\bf\emph{order convergence}}. Recall that a net $(x_{\alpha})_{\alpha \in A}$ in a Riesz space $E$ is said to be order convergent to $x\in E$ (briefly; $x_{\alpha}\xrightarrow{o} x$ or $x_{\alpha}$ $o$-converges to $x$) if there exists  another net $(y_{\beta})_{\beta \in B}$ in $E$ such that

i) $y_{\beta}\downarrow 0$, that is, $(y_{\beta})_{\beta \in B}$ is decreasing to 0 and ;

ii) For each $ \beta \in B$ there exists ${\alpha}_{0} \in A$ such that $|x_{\alpha}-x|\leq y_{\beta}$ for each ${\alpha}\geq {\alpha}_{0}$.

Unbounded order convergence in a Riesz space was defined and studied in \cite{nakano} and \cite{wickstead}.
Recently, many authors have started to work on this topic in \cite{gtx},\cite{gx} and \cite{Gao}. Namely, a net $(x_{\alpha})$ \footnote{The index is not written unless it is necessary} in a Riesz space $E$ is said to be {\bf\emph{unbounded order convergent}}  if the net $(|x_{\alpha}-x|\wedge u)$ is order convergent to zero for each $u\in E^{+}$ (briefly; $x_{\alpha}\xrightarrow{uo} x$ or $x_{\alpha}$ $uo$-converges to $x$). In general, every order convergent net is unbounded order convergent but the converse  is not true (For example, consider $c_{0}$ as a Riesz space under pointwise order, the standard unit vectors $(e_{n})$ are uo-convergent but not o-convergent). It is obvious that order convergence and unbounded order convergence coincide for order bounded nets. Although in general, unbounded order convergence is not topological  (see \cite{gtx}), but in an atomic Riesz space, it is topological  (see Theorem 2 in \cite{dem}).

Let $E$ be a normed Riesz space. A net $(x_{\alpha})$ in $E$ is said to be {\bf\emph{unbounded norm convergent}} to $x\in E$ if 
the net $(|x_{\alpha}-x|\wedge u)$ is norm convergent to zero for each $u\in E^{+}$ (briefly; $x_{\alpha}\xrightarrow{un} x$ or $x_{\alpha}$ $un$-converges to $x$). The notion of unbounded norm convergence was defined in \cite{tro} and many results 
were obtained in \cite{dot} and \cite{kmt}. These notions were extended to the locally solid Riesz spaces (see \cite{dem1}). In \cite{kmt}, it  is noticed that unbounded norm convergence defines a topology, that is, there exists a new topology on the normed Riesz space $E$ so that the unbounded norm convergence and topological convergence  agree. This new topology is called {\bf\emph{un-topology}} . See \cite{kmt} for more details on this topic. In the paper, it is also proved that in Banach lattices, the norm topology and unbounded norm topology  coincide if and only if the space has a strong order unit (Theorem 2.3).
\section{An observation} Let $(E,||.||)$ be a normed Riesz space. For each $u\in E^{+}$, define $P_{u}:E\rightarrow\mathbb{R}$ by  
\begin{center}$P_{u}(x)=|||x|\wedge u||.$\end{center} We will show that for each $u\in E^{+}$, $P_{u}$ is a Riesz pseudonorm. Recall that a real-valued map $p$ on $E$ is called a {\bf\emph{Riesz pseudonorm}} \cite{frem} if the following conditions are satisfied:
\begin{enumerate}
\item[1.] $p(x)\geq 0$  $\forall x \in X$;
\item[2.] If $x=0$, then $p(x)=0$;
\item[3.] $p(x+y)\leq p(x)+p(y)$ $\forall x,y \in X$;
\item[4.] If $\lim_{n \rightarrow \infty} \lambda_{n}=0 $ in $\mathbb{R}$, then $p(\lambda_{n} x)\rightarrow 0 $ in $\mathbb{R}$   $\forall x \in X$;
\item[5.] If $|x| \leq |y|$, then $p(x) \leq p(y)$.
\end{enumerate}
\begin{theorem} Let $(E,||.||)$ be a normed Riesz space. For each $u\in E^{+}$, the map $P_{u}:E\longrightarrow\mathbb{R}^{+}$ defined by  $P_{u}(x)=|||x|\wedge u||$, is a Riesz pseudonorm. Moreover, the un-topology and the topology generated by the family $(P_{u})_{u\in E^{+}}$ are coincide.   
\end{theorem}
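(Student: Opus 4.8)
The plan is to prove the two assertions in turn. For the first, I would simply verify the five axioms of a Riesz pseudonorm directly. Nonnegativity (1) is clear; axiom (2) holds because $x=0$ forces $|x|\wedge u=0$; and axiom (5) is immediate from the monotonicity of $t\mapsto t\wedge u$ on $E^{+}$ together with the fact that $\|\cdot\|$ is a lattice norm, since $|x|\leq|y|$ gives $|x|\wedge u\leq|y|\wedge u$ and hence $P_u(x)\leq P_u(y)$. Axiom (4) follows from the estimate $|\lambda_n x|\wedge u\leq|\lambda_n|\,|x|$, so that $P_u(\lambda_n x)\leq|\lambda_n|\,\|x\|\to 0$. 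The only step that uses a genuine lattice identity is the subadditivity (3): starting from $|x+y|\leq|x|+|y|$ and the standard inequality $(a+b)\wedge c\leq a\wedge c+b\wedge c$ for $a,b,c\in E^{+}$ — which one reads off from the identity $(a\wedge c)+(b\wedge c)=(a+b)\wedge(a+c)\wedge(b+c)\wedge 2c$ — one obtains $|x+y|\wedge u\leq|x|\wedge u+|y|\wedge u$, and applying the monotone, subadditive norm yields $P_u(x+y)\leq P_u(x)+P_u(y)$.

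For the second assertion, recall that the topology $\tau$ generated by a family of Riesz pseudonorms is the locally solid linear topology admitting as a base of neighbourhoods of $0$ all finite intersections $\bigcap_{i=1}^{n}\{x:P_{u_i}(x)<\epsilon\}$ with $\epsilon>0$. The point I would emphasise is that the family $(P_u)_{u\in E^{+}}$ is upward directed: $u\leq v$ implies $|x|\wedge u\leq|x|\wedge v$ and therefore $P_u\leq P_v$, so $\{x:P_{u_1\vee\cdots\vee u_n}(x)<\epsilon\}$ is contained in the finite intersection above. Consequently the single sets $V_{u,\epsilon}:=\{x:\|\,|x|\wedge u\,\|<\epsilon\}$, for $u\in E^{+}$ and $\epsilon>0$, already form a base of $\tau$-neighbourhoods of $0$.

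From this description it follows at once that a net $(x_{\alpha})$ is $\tau$-convergent to $x$ if and only if, for every $u\in E^{+}$ and every $\epsilon>0$, one eventually has $\|\,|x_{\alpha}-x|\wedge u\,\|<\epsilon$; that is, if and only if $\|\,|x_{\alpha}-x|\wedge u\,\|\to 0$ for each $u\in E^{+}$, which is precisely $x_{\alpha}\xrightarrow{un}x$. Hence $\tau$ and the un-topology have exactly the same convergent nets with the same limits, and since a topology is completely determined by its convergent nets — a set being closed precisely when it contains the limit of every convergent net lying in it — the two topologies coincide.

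I do not expect a hard computational obstacle: the pseudonorm axioms are routine and the directedness reduction is a one-line observation. The delicate point is rather conceptual. To be allowed to conclude equality of the two topologies from the coincidence of their convergent nets, one must know beforehand that the un-topology genuinely is a topology — this is exactly the fact of \cite{kmt} recalled in the Introduction. Alternatively, the whole argument can be read as an explicit construction of the un-topology as $\tau$, in which case it is the verification that the sets $V_{u,\epsilon}$ really do generate a (locally solid, linear) topology, rather than the pseudonorm axioms, that carries the content.
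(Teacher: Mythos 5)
Your proposal is correct and follows essentially the same route as the paper: the five pseudonorm axioms are verified with the same estimates (the key step being $|x+y|\wedge u\leq|x|\wedge u+|y|\wedge u$ followed by monotonicity and subadditivity of the lattice norm), and the equality of topologies is obtained by identifying the convergent nets. Your treatment of the second assertion is in fact more careful than the paper's one-line argument, since you make explicit both the directedness of the family $(P_u)_{u\in E^{+}}$ and the need to know beforehand (from the cited reference) that un-convergence is topological before concluding equality of topologies from equality of convergent nets.
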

\begin{proof} Let $u\in E^{+}$ be given. Obviously, the conditions (1),(2) and (5) hold. For condition (3): Let  $x$, $y\in E$ be given. Since $|x+y|\leq  |x|+|y|$, we have 
$|x+y|\wedge u\leq (|x|+|y|)\wedge u\leq |x|\wedge u+|y|\wedge u$ and since $||.||$ is a lattice norm, we get the inequality $P_{u}(x+y)\leq P_{u}(x)+P_{u}(y)$ by the monotonicity and the triangle inequality property of lattice norm. For condition(4): Let $\{{\lambda}_{n}\} \subset\mathbb{R}$ be a sequence such that $\lim_{n \rightarrow \infty} \lambda_{n}=0 $ and $x\in E$, the inequality \begin{center} $P_{u}({\lambda}_{n}x)=|||{\lambda}_{n}x|\wedge u|| \leq ||{\lambda}_{n}x||=|{\lambda}_{n}|||x||$ \end{center} implies that
$\lim_{n \rightarrow \infty}P_{u}(\lambda_{n}x)=0$. Hence $P_{u}$ is a Riesz pseudonorm.

Let $(x_{\alpha})$ be a net converging to $x$ in $un$-topology, that is, $||~ |x_{\alpha}-x|\wedge u~||\rightarrow 0$ for each $u\in E^{+}$. By definition, $P_{u}(x_{\alpha}-x)$ converges to zero for each $u\in E^{+}$, so it converges to $x$ in the topology generated by  the family $(P_{u})_{u\in E^{+}}$. Converse direction is also true. This completes the proof.  
\end{proof}
Note that in a Riesz space, we have the following.
\begin{theorem} Let $E$ be a Riesz space and $p:E\rightarrow\mathbb{R}$ be a map. The followings are equivalent:
\begin{enumerate}
\item[i.] $p$ is a Riesz pseudonorm;
\item[ii.] $p(x)=p(|x|)$ for all $x\in E$ and for each $u\in E^{+}$, the map $p_{u}:E\rightarrow\mathbb{R}$, defined by $p_{u}(x)=p(|x|\wedge u)$, is a  Riesz pseudonorm.
\end{enumerate}
\end{theorem}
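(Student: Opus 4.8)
The plan is to treat the two implications separately. The implication (i)$\Rightarrow$(ii) is essentially the computation already carried out for the special pseudonorm $P_{u}$, done abstractly; the implication (ii)$\Rightarrow$(i) is the real point of the statement and rests on one elementary observation about truncation in an upward directed ordered vector space.

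For (i)$\Rightarrow$(ii): First, $p(x)=p(|x|)$ follows from condition (5) applied twice --- to the pair $x,|x|$ and to the pair $|x|,x$ --- using that $\bigl||x|\bigr|=|x|$. Now fix $u\in E^{+}$ and verify the five conditions for $p_{u}(x)=p(|x|\wedge u)$. Conditions (1) and (2) are immediate; condition (5) holds because $|x|\leq|y|$ gives $|x|\wedge u\leq|y|\wedge u$ with both sides positive, so condition (5) for $p$ applies; condition (3) follows from $|x+y|\wedge u\leq|x|\wedge u+|y|\wedge u$ together with conditions (5) and (3) for $p$; and condition (4) follows from $|\lambda_{n}x|\wedge u\leq|\lambda_{n}x|=|\lambda_{n}|\,|x|$, whence $p_{u}(\lambda_{n}x)\leq p(|\lambda_{n}|\,|x|)\to 0$ by conditions (5) and (4) for $p$.

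For (ii)$\Rightarrow$(i): The key remark is that whenever $z\in E$ satisfies $|z|\leq u$, one has $p(z)=p(|z|)=p(|z|\wedge u)=p_{u}(z)$; thus $p$ coincides with $p_{u}$ on the order interval $[-u,u]$, so $p$ is completely determined by the family $(p_{u})_{u\in E^{+}}$ since $E$ is upward directed. Each condition for $p$ is now obtained from the same condition for an appropriately chosen $p_{u}$: for (1) take $u=|x|$, giving $p(x)=p_{|x|}(x)\geq 0$; (2) is immediate since $p(0)=p_{u}(0)=0$; for (5), given $|x|\leq|y|$ take $u=|y|$, so $p(x)=p_{u}(x)\leq p_{u}(y)=p(y)$; for (3) take $u=|x|+|y|$, which dominates $|x+y|$, $|x|$ and $|y|$, so $p(x+y)=p_{u}(x+y)\leq p_{u}(x)+p_{u}(y)=p(x)+p(y)$; and for (4) take $u=|x|$ and note that $|\lambda_{n}x|\leq u$ once $|\lambda_{n}|\leq 1$, so $p(\lambda_{n}x)=p_{u}(\lambda_{n}x)\to 0$ by condition (4) for $p_{u}$.

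I do not expect a genuine obstacle; the argument is bookkeeping organized around the truncation remark above. The two places that call for a little care are condition (4) in the direction (ii)$\Rightarrow$(i), where one must first pass to the tail of indices with $|\lambda_{n}|\leq 1$ before the truncation level $u=|x|$ becomes inert, and the repeated use of the hypothesis $p(x)=p(|x|)$ to move between $p$ and $p_{u}$. Note that neither the Archimedean property nor any completeness of $E$ enters --- only the lattice operations and the directedness of $E^{+}$.
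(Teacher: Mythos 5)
Your proposal is correct and follows essentially the same route as the paper: for (ii)$\Rightarrow$(i) the paper makes exactly your choices of truncation level ($u=|x|$ for positivity and the scalar condition, $u=|x|+|y|$ for the triangle inequality, $u=|y|$ for monotonicity), each step resting on the observation that $p$ agrees with $p_{u}$ on $[-u,u]$. Your explicit handling of the tail $|\lambda_{n}|\leq 1$ in condition (4) is a small point of care that the paper leaves implicit, but it is not a different argument.
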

\begin{proof} 
 If $(i)$ holds, following the proof of  Theorem(2.1), we can get $(ii)$. Suppose that $(ii)$ holds. Since $p(x)=p(|x|)=p_{|x|}(x)\geq 0$, it is obvious that $p(x)=p(|x|)=p_{|x|}(x)=0$ whenever $x=0$.
Let $x$, $y\in E$ be given. Then
\begin{center}$p(x+y)=p(|x+y|\wedge (|x|+|y|)$\end{center}\begin{center}$\hspace{0.5 cm}$ $=p_{|x|+|y|}(|x+y|)$\end{center} \begin{center}$\hspace{2.1 cm}$ $\leq p_{|x|+|y|}(|x|)+p_{|x|+|y|}(|y|)$\end{center}\begin{center}$\hspace{4.6 cm}$ $=p(|x|\wedge (|x|+|y|))+p(|x|\wedge (|x|+|y|)$\end{center} \begin{center}$\hspace{0.6 cm}$ $=p(|x|)+p(|y|)$\end{center}\begin{center}$=p(x)+p(y)$ \end{center} so that $p$ satisfies the triangle inequality. Let $x\in E$ be given. Then
\begin{center}$\lim_{n\rightarrow \infty}p({\lambda}_{n}x)= \lim_{n \rightarrow \infty}p(|{\lambda}_{n}x|)= \lim_{n\rightarrow \infty}p_{|x|}(|{\lambda}_{n}||x|)=0$.\end{center} If $|x|\leq |y|$ then
\begin{center}$p(x)=p(|x|)=p(|x|\wedge |y|)=p_{|y|}(|x|)=p_{|y|}(x)\leq p_{|y|}(y)=p(|y|)=p(y)$.\end{center}
This completes the proof.
\end{proof}

\section{Some notations and terminolgy} Let $E$ be a Riesz space. A subset $A\subset E$ is called {\bf\emph{solid}} if $y\in A$ whenever $|y|\leq |x|$ in $E$ for some $x\in A$. A linear topology  ${\tau}$ on $E$ is called {\bf\emph{locally solid}} if it has a neighborhood system at zero consisting of solid sets. One can easily  show that a given set $P$ of Riesz pseudonorms has a solid topology with a subbase of zero as the set  $\{p^{-1}(-{\epsilon},{\epsilon}):p\in P, {\epsilon}>0\}$. This topology is denoted by $<P>$, and it is called {\bf\emph{locally solid topology generated by $P$}}. Conversely, Fremlins Theorem says that every locally solid topology is  generated by a family of Riesz pseudonorms. That is, a linear topology ${\tau}$ is locally solid if and only if ${\tau}=<P>$ for some set $P$ of Riesz pseudonorms.(see \cite{frem}) .\\ 
 $\indent$ Let $p$ be a Riesz pseudonorm on $E$. For each $u \in E _{+}$, the map $p_{u}: E \rightarrow \mathbb{R}$ is also a Riesz pseudonorm  defined by $p_{u}(x)=p(|x| \wedge u)$. Let $(E,\tau)$ be a locally solid Riesz space. So there exists a family of Riesz pseudonorms $(p_{i})_{i \in I}$ such that $ \tau = < (p_{i})_{i \in I} > $. For any $A \subset E_{+}$, there exists a different family of Riesz pseudonorms $(p_{i,a})_{i \in I, a\in A}$ where $p_{i,a}(x)=p(|x| \wedge a)$ for each $i \in I$ and $a \in A$. This related family defines a locally solid topology. This fact coincides with the Mitchell A. Taylor's definition of "unbounded $\tau-$convergence with respect to $A$" in \cite{taylor}. Here is the Mitchell A. Taylor's definition.
\begin{definition} Let $X$ be a vector lattice, $A \subseteq X$ be an ideal and $\tau$ be a locally solid topology on $A$. Let $(x_{\alpha})$ be a net in $X$ and $x\in X$. We say that $(x_{\alpha})$ unbounded $\tau$-converges to $x$ with respect to
$A$ if $|(x_{\alpha})-x|\wedge |a| \xrightarrow{\tau}$ for all $a \in A_{+}$.
\end{definition}
$\indent$ In \cite{taylor}, the topology corresponding to the convergence in the above definition is denoted by $u_{A}\tau$.
\subsection*{Observation:} Let $E$ be a Riesz space, and $p: E \rightarrow \mathbb{R}$ be a Riesz pseudonorm. For a given nonempty set $A \subset E_{+}$, consider the map $p_{A}: E \rightarrow \mathbb{R}$ defined by \begin{center}
$p_{A}(x)= \sup_{a \in A}p(|x|\wedge a)$ 
\end{center} 
It is obvious that the map $p_{A}$ satisfies the conditions $(1)-(3)$ and $(5)$, we must check condition $(4)$: Let $\{ \lambda_{n}\} \subset \mathbb{R}$ be any sequence converging to zero. Then \begin{center}
$p_{A}(\lambda_{n}x)=\sup_{a \in A}p(|\lambda_{n} x| \wedge a)=\sup_{a \in A}p(|\lambda_{n}| | x| \wedge a)$
\end{center} 
\begin{center}
$\hspace{4.7 cm}$ $ \leq \sup_{a \in A}p(|\lambda_{n}||x|)$
\end{center}
\begin{center}   
$\hspace{4.5 cm}$ $ =p(|\lambda_{n}||x|)\longrightarrow 0$
\end{center}
so that $p_{A}$ is a Riesz pseudonorm. \\ $\indent$
Let $P=(p_{i})_{i \in I}$ be a family of Riesz pseudonorms and $\mathscr{A} \subset \mathscr{P}(E_{+})$ that does not contain the empty set. This family generates a topology, say $\tau$. The locally solid topology generated by the family $\{ p_{i,A}:i \in I, A \in \mathscr{A}\}$ will be denoted by $u<\tau,\mathscr{A}>$. Actually, if $\mathscr{A}$ contains the empty set, then $u<\tau,\mathscr{A}>$  is nothing but a discrete topology.
\subsection*{Some remarks:} Let $(E,\tau)$ be a locally solid Riesz space, $(p_{i})_{i \in I}$ be the family of Riesz pseudonorms such that $\tau = <(p_{i})_{i \in I}>$. Then
\begin{enumerate}
\item[(1)] For any $\mathscr{A} \subset \mathscr{P}(E_{+}) $, $u<\tau,\mathscr{A}> \subset \tau$ holds.\\
Proof: $x_{\alpha} \xrightarrow{\tau} x \Longleftrightarrow p_{i}(x_{\alpha} -x)\rightarrow 0 \Longleftrightarrow p_{i}(|x_{\alpha} -x|)\rightarrow 0$, and for each $a \in E_{+}$ we have $p_{i}(|x_{\alpha} -x|\wedge a) \leq p_{i}(|x_{\alpha} -x|) $, hence \begin{center}
$\sup_{a \in A}p_{i}(|x_{\alpha} -x|\wedge a) \leq p_{i}(|x_{\alpha} -x|)$ for each $A \in \mathscr{A}$.
\end{center}
So $x_{\alpha}\xrightarrow{u<\tau,\mathscr{A}>}x$.
\item[(2)]If $\mathscr{A}=\{\{E_{+}\}\}$, then $u<\tau,\{\{E_{+}\}\} >= \tau$. \\
Proof: It is clear that $\sup_{a \in E_{+}}p_{i}(|x|\wedge a)=p_{i}(|x|)=p_{i}(x)$.
\item[(3)] If $\mathscr{A} \subset \mathscr{B}$ then $u<\tau,\mathscr{A}> \subset u<\tau,\mathscr{B}>$ for all $\mathscr{A} \subset \mathscr{B} \subset \mathscr{P} (E_{+})$.\\
Proof: $x_{\alpha}\xrightarrow{u<\tau,\mathscr{B}>}x \Longleftrightarrow \sup_{b \in B}p_{i}(|x_{\alpha} -x|\wedge a)\rightarrow 0$ for each  $B \in \mathscr{B}$ \begin{center}
 $\hspace{2.99 cm}$ $\Longrightarrow \sup_{a \in A}p_{i}(|x_{\alpha} -x|\wedge a)\rightarrow 0$ for each $A \in \mathscr{A} \subset \mathscr{B}$.
\end{center} 
Hence, $x_{\alpha}\xrightarrow{u<\tau,\mathscr{A}>}x$.
\item[(4)] For each $\mathscr{A} \subset \mathscr{P} (E_{+})$ $u<\tau,\mathscr{\bigcup A}> \subset u<\tau,\mathscr{A}>$ holds. \\
Proof: Let $x_{\alpha}\xrightarrow{u<\tau,\mathscr{A}>}x$. So for a fixed $i \in I$ and $A \in \mathscr{A}$,  $p_{i,A}(x_{\alpha}-x)\rightarrow 0 \Longleftrightarrow \sup_{a\in A} p_{i}(|x_{\alpha}-x| \wedge a) \rightarrow 0 $, and it is obvious that \begin{center}
$p_{i}(|x_{\alpha}-x|\wedge a) \leq \sup_{a\in A} p_{i}(|x_{\alpha}-x| \wedge a) $ for each $a \in A$.
\end{center}
Hence, $p_{i,\{a \}}(x_{\alpha}-x)=p_{i,\{a \}}(|x_{\alpha}-x|)=\sup_{a\in \{a\}} p_{i}(|x_{\alpha}-x| \wedge a)=p_{i}(|x_{\alpha}-x| \wedge a) \rightarrow 0 $.
\item[(5)] For each $\mathscr{A} \subset \mathscr{P}(E_{+})$  $u<\tau,\mathscr{\bigcup A}>=u<\tau,I(\mathscr{\bigcup A})>$ holds where $I( \mathscr{\bigcup A})$ is the ideal generated by $\mathscr{\bigcup A}$. \\ Proof: Since $\mathscr{\bigcup A} \subset I(\mathscr{\bigcup A}) $, we have $u<\tau,\mathscr{\bigcup A}> \subset u<\tau,I(\mathscr{\bigcup A})>$ from $(3)$. Let  $x_{\alpha}\xrightarrow{u<\tau,\mathscr{A}>}x$ and $b \in I(\mathscr{\bigcup A})_{+} $ be given, there exists $a_{1},...,a_{n}\in \mathscr{\bigcup A}$ and $k \geq 0$ such that $0\leq b\leq k(a_{1}+...+a_{n})$. Then
\begin{center}$|x_{\alpha}-x|\wedge b\leq |x_{\alpha}-x|\wedge k(a_{1}+...+a_{n})\leq \sum_{i=1}^{n}|x_{\alpha}-x|\wedge ka_{i}$,\end{center}
\begin{center}
$\hspace{7.4 cm}$ $=k\sum_{i=1}^{n} \frac{1}{k}|x_{\alpha}-x|\wedge a_{i}$
\end{center}
\begin{center}
$\hspace{7.5 cm}$ $\leq km\sum_{i=1}^{n}|x_{\alpha}-x|\wedge a_{i}$
\end{center}
where $m$ is the smallest positive integer  greater than $\frac{1}{k}$. Then by the monotonicity of $p_{i}$,
\begin{center}
$p_{i}(|x_{\alpha}-x|\wedge b) \leq p_{i}(km\sum_{i=1}^{n}|x_{\alpha}-x|\wedge a_{i}) \rightarrow 0 $.
\end{center}
Hence, $p_{i}(|x_{\alpha}-x|\wedge b)= \sup_{b \in \{b\}} p_{i}(|x_{\alpha}-x|\wedge b)$. This completes the proof. 
\item[(6)]For each $\mathscr{A} \subset \mathscr{P} (E_{+})$  $u<\tau,\mathscr{\bigcup A}>=u<\tau,\mathscr{ \overline{\bigcup A}}>$ holds. \\ 
Proof: Suppose that $x_{\alpha} \xrightarrow{u<\tau,\mathscr{A}>}x$ and $b \in (\mathscr{\overline{\bigcup A}})_{+}$ be given. Choose a net $(b_{\beta}) \in \mathscr{\bigcup A}$ with $b_{\beta} \xrightarrow{u<\tau,\mathscr{A}>}b$. Let $i \in I$ be fixed and $\epsilon \geq 0$ be given. Choose $\beta_{0}$ such that $p_{i}(b_{\beta_{0}}-b)< \frac{\epsilon}{2}$. Then
\begin{center}
$|x_{\alpha}-x|\wedge b= |x_{\alpha}-x|\wedge (b-b_{{\beta}_{0}}+b_{{\beta}_{0}})$
\end{center}
\begin{center}$\hspace{2.5 cm}$ $\leq |x_{\alpha}-x|\wedge (|b-b_{{\beta}_{0}}|+\vert b_{{\beta}_{0}}\vert)$\end{center} \begin{center} $\hspace{4.2 cm}$
$\leq |x_{\alpha}-x|\wedge |b-b_{{\beta}_{0}}|+|x_{\alpha}-x|\wedge |b_{{\beta}_{0}}|$.
\end{center} 
Applying $p_{i}$ to this inequality,  one can show the existence of $\alpha_{0}$ such that $p_{i,\{b\}}(x_{\alpha}-x)< \epsilon$. This completes the proof. 
\item[(7)] If  $0 \leq a \leq b $, then $u<\tau,\{a\}>u<\tau,\{b\}>$. \\
Proof: It is clear that; \begin{center}$\sup_{a \in \{a\}}p_{i}(|x_{\alpha}-x|\wedge a)=p_{i}(|x_{\alpha}-x|\wedge a)$\end{center} \begin{center}
$\hspace{4.2 cm}$
$\leq p_{i}(|x_{\alpha}-x|\wedge b)$
\end{center} 
\begin{center}
$\hspace{5.8 cm}$
 $=\sup_{b \in \{b\}}p_{i}(|x_{\alpha}-x|\wedge b) $.
\end{center}
 
\item[(8)] If $e \in E$ is a strong order unit, then $u<\tau,\{\{e\}\}>=u<\tau,\bigcup E_{+}>$. But the converse of this statement is not true in general. For example, consider $c_{0}$  as a Banach lattice with supremum norm, with norm topology ${\tau}$ and 
$e=({1\over n})$. Then $u<{\tau},\{\{e\}\}>=u<{\tau,\bigcup E_{+}}>$, but $e$ is not an order unit.
\item[(9)] If $e$ is a quasi-interior point, then $u<{\tau},\{\{e\}\}>=u<{\tau,\bigcup E_{+}}>$ from $(5)$ and $(6)$.
\item[(10)]For any $\mathscr{A} \subset \mathscr{P}(E_{+}) $,  $u<\tau,\mathscr{A}>=u<u<\tau,\mathscr{A}>,\mathscr{A}>$ holds.
\item[(11)] For any $\{A\} \in \mathscr{P}(E_{+}) $, $u<\tau,\{A\}> \neq u_{A}\tau$, but $u<\tau,\bigcup A> = u_{A}\tau$ holds. Moreover, $ u<\tau,\{A\}> \subset u_{A}\tau $. As an example let consider $\mathbb{R}^2$ with Euclidean norm,and take the set of non-negative part of $x-$axis as $A$, then  the sequence $(x_{n})$ $(x_{n}:=2+\sin n)$ does not converges in $u_{A}\tau$,but converges in $u<\tau,\{A\}>$.  

\end{enumerate}

\section{Unbounded locally solid Riesz space}
From the motiviation of the above observation, we give the following definition.
\begin{definition} A real valued map $q$ on a Riesz space $E$ is said to be {\bf\emph{unbounded Riesz pseudonorm}} if there exists a Riesz pseudonorm $p$ on $E$ and  $ A \subset E^{+}$ satisfying $q(x)=\sup_{a \in A} p(|x|\wedge a)$. In this case, we say that $q$ is generated by $p$ and the subset $A$.
\end{definition}

It is obvious that every unbounded Riesz pseudonorm is a  Riesz pseudonorm. So the topology generated by unbounded Riesz pseudonorm is a locally solid topology. 
If unbounded Riesz pseudonorm $q$ is generated by  Riesz pseudonorm $p$ and $A\subset E^{+}$,  then the topology generated by $q$ is weaker than the topology generated by $p$. Remind that every family of  Riesz pseudonorms defines a locally solid topology. Conversely, every locally solid topology is determined by a family of Riesz pseudonorms. 
\begin{definition} Let $(E,{\tau})$ be a locally solid Riesz space generated by the family $(p_{i})_{i\in I}$ of Riesz pseudonorms . The locally solid Riesz space on $E$ generated by the family of unbounded Riesz pseudonorm on $E$ is called {\bf\emph{unbounded locally solid Riesz space}} generated by $\tau$, and denoted by $\tau^{'}$
\end{definition}
\begin{prop} Let $(E,{\tau})$ be a locally solid Riesz space. If $\tau$ is a Hausdorff locally solid topology, then the unbounded locally solid topology is also Hausdorff. 
\end{prop}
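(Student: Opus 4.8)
The plan is to recast Hausdorffness as a separation property of the generating family of Riesz pseudonorms and then to exploit that truncating a Riesz pseudonorm ``at the level of a point'' leaves its value at that point unchanged.

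First I would recall the standard fact that a locally solid topology $\langle Q\rangle$ generated by a family $Q$ of Riesz pseudonorms is Hausdorff if and only if $Q$ \emph{separates the points of $E$ from $0$}, i.e.\ for every $x\neq 0$ there is $q\in Q$ with $q(x)>0$. Indeed, the finite intersections of the sets $\{z:q(z)<\varepsilon\}$ ($q\in Q$, $\varepsilon>0$) form a base of $\langle Q\rangle$-neighbourhoods of $0$, so $\bigcap\{\,\text{neighbourhoods of }0\,\}=\bigcap_{q\in Q}q^{-1}(\{0\})$; the displayed condition thus says exactly that $\{0\}$ is closed, which for a linear topology is equivalent to Hausdorffness.

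Now assume $\tau=\langle(p_i)_{i\in I}\rangle$ is Hausdorff, and recall that $\tau'$ is generated by the unbounded Riesz pseudonorms $p_{i,A}$ ($i\in I$, $A\subseteq E^{+}$), where $p_{i,A}(y)=\sup_{a\in A}p_i(|y|\wedge a)$; in particular, for each $a\in E^{+}$ the singleton case $p_{i,\{a\}}(y)=p_i(|y|\wedge a)$ belongs to this family. Fix $x\neq 0$. Since $|x|=0$ would force $x=0$, we have $|x|\neq 0$ and $|x|\in E^{+}$, so by the criterion applied to $\tau$ there is $i\in I$ with $p_i(|x|)>0$. Put $q:=p_{i,\{|x|\}}$. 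Because $|x|\wedge|x|=|x|$, we get $q(x)=p_i(|x|\wedge|x|)=p_i(|x|)>0$. Hence the generating family of $\tau'$ separates the points of $E$ from $0$, and by the criterion $\tau'$ is Hausdorff.

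The only point that needs care is conceptual rather than computational: by Remark~(1) we have $\tau'\subseteq\tau$, and a linear topology coarser than a Hausdorff one need not be Hausdorff, so the conclusion is not a formality. What makes it work is that $\tau'$ is a very particular coarsening of $\tau$ --- it truncates each $p_i$ at every positive level $a$ --- and truncating at the level $a=|x|$ does not disturb the value at $|x|$ itself, which is exactly what keeps some pseudonorm of the $\tau'$-family positive at $x$. (If one prefers to build $\tau'$ from unbounded Riesz pseudonorms $p_{i,A}$ with $A$ an arbitrary nonempty subset of $E^{+}$ rather than from singletons, the argument is unchanged, since $A=\{|x|\}$ is admissible.)
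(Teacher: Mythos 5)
Your argument is correct and follows essentially the same route as the paper: both proofs pick, for $x\neq 0$, an index $i$ with $p_i(x)=p_i(|x|)>0$ and observe that the unbounded pseudonorm $p_{i,\{|x|\}}$ built from the singleton $\{|x|\}$ satisfies $p_{i,\{|x|\}}(x)=p_i(|x|\wedge|x|)=p_i(|x|)>0$, so the generating family of $\tau'$ separates points from $0$. Your added discussion of why separation of points is equivalent to Hausdorffness, and of why the coarseness $\tau'\subseteq\tau$ alone would not suffice, is a useful clarification but does not change the substance of the argument.
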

\begin{proof}
Let $(p_{i})_{i \in I}$ be a family of Riesz pseudonorms such that $\tau=<(p_{i})_{i \in I}>$ and $x \neq 0$ be given, then there exists some $i_{0} \in I$ such that $p_{i_{0}}(x)>0$.Then, \begin{center}
$q_{i_{0},\{|x|\}}:=\sup_{a\in \{|x|\}}p_{i_{0}}(|x|\wedge a)=p_{i_{0}}(|x|\wedge|x|)=p_{i_{0}}(|x|)=p_{i_{0}}(x)>0$.
\end{center}
It is obvious that $q_{i_{0},\{|x|\}}$ is an unbounded Riesz pseudonorm, so $\tau^{'}$ is a Hausdorff topology.
\end{proof}
\begin{definition} A net $(x_{\alpha})$ in a locally solid Riesz space $(E,\tau)$ is {\bf\emph{unbounded topological convergent }} if it is convergent in unbounded locally solid Riesz space $(E,\tau^{'})$.
\end{definition}
\begin{theorem}
Let $(E,{\tau})$ be a Hausdorff locally solid Riesz space and $(x_{\alpha})$ be an increasing net. Then the followings are equivalent:
\begin{enumerate}
\item $(x_{\alpha})\xrightarrow{\tau} x$ in $(E,\tau)$;
\item $(x_{\alpha})\xrightarrow{\tau^{'}}x$ in $(E,\tau^{'})$.
\end{enumerate}
\end{theorem}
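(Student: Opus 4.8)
The plan is to handle the two implications separately; the implication $(1)\Rightarrow(2)$ is essentially free and $(2)\Rightarrow(1)$ carries all the content. For $(1)\Rightarrow(2)$ I would simply invoke $\tau'\subseteq\tau$ (recorded in Section 3): a $\tau$-convergent net is automatically $\tau'$-convergent to the same limit, and monotonicity of $(x_\alpha)$ plays no role. So the real task is $(2)\Rightarrow(1)$.

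For $(2)\Rightarrow(1)$, fix a generating family $(p_i)_{i\in I}$ of Riesz pseudonorms with $\tau=\langle(p_i)_{i\in I}\rangle$, so that $x_\alpha\xrightarrow{\tau'}x$ unpacks to (at least) the condition $p_i(|x_\alpha-x|\wedge a)\to 0$ for every $i\in I$ and every $a\in E^+$, which is all I will need. By passing to the tail $(x_\alpha-x_{\alpha_0})_{\alpha\geq\alpha_0}$ and replacing $x$ by $x-x_{\alpha_0}$ for a fixed $\alpha_0$ — which alters neither hypothesis nor conclusion, since both $\tau$ and $\tau'$ are translation invariant and convergence depends only on tails — I may assume $x_\alpha\geq 0$ for all $\alpha$.

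The first, and decisive, step is to upgrade the $\tau'$-convergence of the monotone net to genuine order convergence, i.e. to prove $x_\alpha\uparrow x$. Since $\tau$ is Hausdorff, so is $\tau'$ (shown above), and a Hausdorff locally solid topology has $\tau'$-closed positive cone: the map $z\mapsto z^-$ is $\tau'$-continuous and vanishes on $E^+$, hence vanishes at any $\tau'$-limit of positive vectors. Applying this to the net $(x_\alpha-x_\beta)_{\alpha\geq\beta}\subseteq E^+$, which $\tau'$-converges to $x-x_\beta$, gives $x\geq x_\beta$ for each $\beta$, so $x$ is an upper bound of $(x_\alpha)$; applying it to $(y-x_\alpha)_{\alpha}\subseteq E^+$ for an arbitrary upper bound $y$, which $\tau'$-converges to $y-x$, gives $y\geq x$. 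Hence $x=\sup_\alpha x_\alpha$, that is, $x_\alpha\uparrow x$.

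The second step then finishes quickly: from $x_\alpha\uparrow x$ and $x_\alpha\geq 0$ we get $0\leq x-x_\alpha\leq x$, so $|x_\alpha-x|\wedge x=|x_\alpha-x|$; feeding the single test element $a=x\in E^+$ into the hypothesis yields $p_i(|x_\alpha-x|)=p_i(|x_\alpha-x|\wedge x)\to 0$ for every $i\in I$, i.e. $x_\alpha\xrightarrow{\tau}x$. I expect the main obstacle to be exactly the first step — converting unbounded $\tau$-convergence of an increasing net into the order statement $x_\alpha\uparrow x$ — and this is precisely where the Hausdorff hypothesis is used, through closedness of $E^+$ in $\tau'$. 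Once $x=\sup_\alpha x_\alpha$ is available, the limit $x$ itself is an admissible ``unbounding'' element dominating all the vectors $|x_\alpha-x|$, which is the mechanism that lets $\tau'$-convergence collapse back onto $\tau$-convergence.
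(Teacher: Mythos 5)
Your proposal is correct and follows essentially the same route as the paper: the implication $(1)\Rightarrow(2)$ from $\tau'\subseteq\tau$, and for $(2)\Rightarrow(1)$ first establishing $x_\alpha\uparrow x$ from the Hausdorffness of $\tau'$ and then using a single order bound of $(|x_\alpha-x|)$ as the test element in the unbounded pseudonorms (you use $x$ after normalizing to a positive net, the paper uses $2|x|$ without normalizing). The only difference is one of detail: the paper simply asserts $x_\alpha\uparrow x$ from Proposition~4.3, whereas you supply the standard closed-positive-cone argument justifying it, which is a welcome elaboration rather than a change of method.
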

\begin{proof}
Since $\tau^{'} \subset \tau$, it is easy to see that $(1)$ implies $(2)$. Now suppose $(2)$ holds. Since $\tau^{'}$ is a Hausdorff locally solid Riesz space by the Proposition 4.3, we have $x_{\alpha}\uparrow x$. Thus $|x|$ is an upper bound for the net $(x_{\alpha})$ and $2|x|$ is an upper bound for the net $(|x_{\alpha}-x|)$. Now suppose that  $(p_{i})_{i \in I}$ is the family of Riesz pseudonorms such that  $\tau=<(p_{i})_{i \in I}>$. Let $i \in I$ be arbitrary. Then, \begin{center}
$p_{i}(x_{\alpha}-x)=p_{i}(|x_{\alpha}-x|)=p_{i}(|x_{\alpha}-x|\wedge 2|x|)=\sup_{a\in \{2|x|\}}p_{i}(|x_{\alpha}-x|\wedge a)$
\end{center}
\begin{center} $\hspace{7.2 cm}$
$:=q_{i,\{2|x|\}}(x_{\alpha}-x)\rightarrow 0$.
\end{center}
This completes the proof.
\end{proof}
\begin{theorem}
Let $(E,{\tau})$ be a Hausdorff locally solid Riesz space, and $\tau^{'}$ be the unbounded locally solid topology generated by $\tau$. Then $\tau$ has Lebesgue property if and only if $\tau^{'}$ has Lebesgue property. 
\end{theorem}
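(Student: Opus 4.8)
The plan is to prove the two implications separately; the forward one is essentially free, and the backward one rests on the single observation that the Lebesgue property only ever tests nets that are (eventually) order bounded, and on such nets $\tau$ and $\tau'$ are indistinguishable.

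First I would assume that $\tau$ has the Lebesgue property. Since $\tau' \subset \tau$ by Remark (1) of the previous section, every $\tau$-convergent net is $\tau'$-convergent; hence if $x_{\alpha}\downarrow 0$ then $x_{\alpha}\xrightarrow{\tau}0$ and a fortiori $x_{\alpha}\xrightarrow{\tau'}0$. So $\tau'$ has the Lebesgue property, with nothing further to check.

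For the converse, I would suppose that $\tau'$ has the Lebesgue property and take any net $x_{\alpha}\downarrow 0$ in $E$. Fix an index $\alpha_{0}$ and set $u:=x_{\alpha_{0}}\in E^{+}$, so that $0\leq x_{\alpha}\leq u$ for all $\alpha\geq\alpha_{0}$; thus a tail of the net is order bounded by $u$. Let $(p_{i})_{i\in I}$ be a family of Riesz pseudonorms with $\tau=\langle(p_{i})_{i\in I}\rangle$. For each $i\in I$ the map $q_{i,\{u\}}(x)=\sup_{a\in\{u\}}p_{i}(|x|\wedge a)=p_{i}(|x|\wedge u)$ is an unbounded Riesz pseudonorm generated by $p_{i}$ and the singleton $\{u\}\subset E^{+}$, hence it belongs to the generating family of $\tau'$. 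Because $\tau'$ has the Lebesgue property, $x_{\alpha}\xrightarrow{\tau'}0$, so $q_{i,\{u\}}(x_{\alpha})=p_{i}(|x_{\alpha}|\wedge u)\to 0$ for each $i$. But for $\alpha\geq\alpha_{0}$ we have $|x_{\alpha}|\wedge u=x_{\alpha}\wedge u=x_{\alpha}$, so $p_{i}(x_{\alpha})\to 0$ for every $i\in I$, i.e. $x_{\alpha}\xrightarrow{\tau}0$. Hence $\tau$ has the Lebesgue property. Alternatively, once $x_{\alpha}\xrightarrow{\tau'}0$ is established, one can close the argument by applying Theorem 4.5 to the increasing net $-x_{\alpha}\uparrow 0$, which $\tau'$-converges to $0$ and therefore $\tau$-converges to $0$.

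I do not expect a genuine obstacle: the only points needing a line of justification are that the singleton-generated pseudonorm $q_{i,\{u\}}$ really is among the unbounded Riesz pseudonorms defining $\tau'$ (immediate from Definitions 4.1--4.2), and that $x_{\alpha}\downarrow 0$ forces the net to lie below a fixed positive element from some point on, so that $|x_{\alpha}|\wedge u=x_{\alpha}$ eventually and convergence can be read off a tail. The conceptual content is simply that $\tau$ and $\tau'$ induce the same convergence on order-bounded nets, and the Lebesgue property speaks only about such nets; the rest is bookkeeping. (The Hausdorff hypothesis is not needed for the direct route above; it only enters if one prefers to invoke Theorem 4.5.)
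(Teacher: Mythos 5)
Your proof is correct, and your primary argument for the harder direction is slightly different from (and a bit sharper than) the paper's. The paper disposes of the backward implication in one line by citing Theorem 4.5: if $x_{\alpha}\downarrow 0$ then $-x_{\alpha}\uparrow 0$ is an increasing net that $\tau'$-converges, hence $\tau$-converges --- exactly the alternative you mention at the end. Your main route instead unwinds that reduction: you use the fact that a decreasing-to-zero net is order bounded by $u=x_{\alpha_{0}}$ on a tail, so the single generating pseudonorm $q_{i,\{u\}}$ of $\tau'$ already computes $p_{i}(x_{\alpha})$ for $\alpha\geq\alpha_{0}$, and $\tau'$-convergence forces $\tau$-convergence directly. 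The payoff of your version is that you never need to identify the limit of the net via Hausdorffness (which is where the hypothesis enters in the proof of Theorem 4.5, to get $x_{\alpha}\uparrow x$); you correctly observe that the Hausdorff assumption is superfluous for this theorem if one argues directly. Both arguments rest on the same conceptual point --- $\tau$ and $\tau'$ agree on order-bounded nets, and the Lebesgue property only tests such nets --- so the difference is one of packaging, but your self-contained computation is the more informative of the two.
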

\begin{proof}
One side of the implication is clear. Let us assume that $x_{\alpha}\downarrow 0$ implies $x_{\alpha} \xrightarrow{\tau^{'} }0$. Then , it is easy to see that $x_{\alpha} \xrightarrow{\tau} 0$ by using the Theorem 4.5. This completes the proof.  
\end{proof}

\subsection{Product of unbounded locally solid Riesz space}
\begin{theorem} Let $(E_{i},{\tau}_{i})_{i\in I}$ be a family of locally solid Riesz spaces. Then the product space $\prod_{i\in I}E_{i}$ is unbounded locally solid Riesz space if and only if for each $i$, $E_{i}$ is an unbounded locally solid Riesz space.
\end{theorem}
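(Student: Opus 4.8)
The plan is to reduce everything to the single identity $\bigl(\prod_{i\in I}\tau_i\bigr)'=\prod_{i\in I}\tau_i'$, after which both directions of the ``iff'' fall out of elementary facts about product topologies. Throughout I write $E:=\prod_{i\in I}E_i$, endowed with the coordinatewise order and the product topology $\tau:=\prod_{i\in I}\tau_i$; this is again a locally solid Riesz space, a base of solid zero-neighbourhoods being the cylinders $\prod_i V_i$ with $V_i$ a solid $\tau_i$-neighbourhood of $0$ and $V_i=E_i$ off a finite set. Recall that a locally solid Riesz space $(F,\sigma)$ is an \emph{unbounded} locally solid Riesz space precisely when $\sigma=\sigma'$; equivalently, when $\sigma=\rho'$ for some locally solid topology $\rho$ on $F$, the two being the same by the idempotency of $\rho\mapsto\rho'$ recorded in remark~(10). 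So the theorem reads: $\prod_i\tau_i=(\prod_i\tau_i)'$ if and only if $\tau_i=\tau_i'$ for every $i$.

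First I would fix, for each $i$, a family $(p_{i,j})_{j\in J_i}$ of Riesz pseudonorms with $\tau_i=\langle(p_{i,j})_{j\in J_i}\rangle$. Since each coordinate projection $\pi_i:E\to E_i$ is a Riesz homomorphism, $\widetilde p_{i,j}:=p_{i,j}\circ\pi_i$ is a Riesz pseudonorm on $E$, and $\tau=\langle(\widetilde p_{i,j})_{i\in I,\,j\in J_i}\rangle$. Hence $\tau'$ is generated by the unbounded Riesz pseudonorms $(\widetilde p_{i,j})_A$, where $A$ runs over the nonempty subsets of $E_+$, and for $x=(x_k)_k\in E$ one has
\[
(\widetilde p_{i,j})_A(x)=\sup_{a\in A}\widetilde p_{i,j}(|x|\wedge a)=\sup_{a\in A}p_{i,j}(|x_i|\wedge a_i)=\bigl((p_{i,j})_{\pi_i(A)}\circ\pi_i\bigr)(x),
\]
so that $(\widetilde p_{i,j})_A$ sees $A$ only through $\pi_i(A)\subseteq(E_i)_+$. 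As $A$ ranges over the nonempty subsets of $E_+$, the set $\pi_i(A)$ ranges over all nonempty subsets of $(E_i)_+$ (given $B\subseteq(E_i)_+$ nonempty, take $A=\{\iota_i(b):b\in B\}$, where $\iota_i(b)$ has $i$-coordinate $b$ and all others $0$). Therefore the generating family $\{(\widetilde p_{i,j})_A:i,j,A\}$ of $\tau'$ coincides, as a set of pseudonorms on $E$, with $\{(p_{i,j})_B\circ\pi_i:i,j,\ B\subseteq(E_i)_+\}$, and this latter set is exactly a generating family for the product topology $\prod_i\tau_i'$. This establishes $(\prod_i\tau_i)'=\prod_i\tau_i'$.

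Granting this identity, the forward direction of the stated equivalence, $(\Leftarrow)$, is immediate: $\tau_i=\tau_i'$ for all $i$ gives $\prod_i\tau_i=\prod_i\tau_i'=(\prod_i\tau_i)'$. For $(\Rightarrow)$, recall that $\tau_i'\subseteq\tau_i$ always holds (remark~(1)), so it suffices to prove $\tau_{i_0}\subseteq\tau_{i_0}'$ for an arbitrary fixed $i_0$. Let $U$ be a $\tau_{i_0}$-neighbourhood of $0$ in $E_{i_0}$. Then $U\times\prod_{i\neq i_0}E_i$ is a $\prod_i\tau_i$-neighbourhood of $0$ in $E$, hence, by hypothesis, a $\prod_i\tau_i'$-neighbourhood of $0$; pick a basic one $\prod_i W_i\subseteq U\times\prod_{i\neq i_0}E_i$ inside it, with each $W_i$ a $\tau_i'$-neighbourhood of $0$ and $W_i=E_i$ off a finite set. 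Since $0\in W_i$ for every $i$, the point with $i_0$-coordinate $w\in W_{i_0}$ and all other coordinates $0$ lies in $\prod_i W_i\subseteq U\times\prod_{i\neq i_0}E_i$, whence $w\in U$; thus $W_{i_0}\subseteq U$, so $U$ is a $\tau_{i_0}'$-neighbourhood of $0$. Hence $\tau_{i_0}\subseteq\tau_{i_0}'$, and combined with the reverse inclusion, $\tau_{i_0}=\tau_{i_0}'$.

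I expect the only real obstacle to be the bookkeeping in the second paragraph: one needs to know that $\tau'$ may be computed from \emph{any} generating family of Riesz pseudonorms for $\tau$ (so that the coordinate-pullback family $(\widetilde p_{i,j})$ is a legitimate choice, and $\prod_i\tau_i'$ is likewise computed from the chosen generators of the $\tau_i$), and one must track carefully that, because $(\widetilde p_{i,j})_A$ depends on $A$ only via the $i$-th coordinates $\{a_i:a\in A\}$, these coordinate sets sweep out all nonempty subsets of $(E_i)_+$. Once the identity $(\prod_i\tau_i)'=\prod_i\tau_i'$ is in place, everything else is routine manipulation of product-topology neighbourhood bases.
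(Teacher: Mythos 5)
Your proof is correct, and its computational core is the same as the paper's, but the architecture is genuinely different. The paper proves the two implications by separate hands-on constructions: for sufficiency it transports each generating unbounded Riesz pseudonorm $q$ of a factor $E_{j}$ to $q\circ P_{j}$ on the product and verifies, exactly as in your displayed identity, that $q\circ P_{j}$ is generated by $p\circ P_{j}$ and the lifted set $f_{j}(A_{j})$; for necessity it restricts each generating unbounded pseudonorm $q$ of the product along the coordinate embedding $f_{i_{0}}$ and checks that $q\circ f_{i_{0}}$ is again an unbounded Riesz pseudonorm generating $\tau_{i_{0}}$. You instead isolate the single identity $(\prod_{i}\tau_{i})'=\prod_{i}\tau_{i}'$ --- essentially the paper's subsequent Theorem 4.8 specialized to $\mathscr{A}_{i}=\mathscr{P}((E_{i})_{+})\setminus\{\emptyset\}$, supplemented by your observation that $(\widetilde p_{i,j})_{A}$ sees $A$ only through $\pi_{i}(A)$, which is genuinely needed since products of subsets do not exhaust $\mathscr{P}(E_{+})$ --- and you then obtain necessity by a neighbourhood-slicing argument in the product rather than by pulling back along $f_{i_{0}}$. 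Your route buys a reusable commutation identity and a cleaner logical skeleton; its cost is the reliance on the characterization ``unbounded $\iff\sigma=\sigma'$'' and on $\tau'$ being well defined independently of the chosen generating family of Riesz pseudonorms, a point you rightly flag and which the paper sidesteps by working throughout with fixed generating families and the weaker reading ``the topology is generated by \emph{some} family of unbounded Riesz pseudonorms.'' If you add a sentence verifying that equivalence (every Riesz pseudonorm $q$ satisfies $q=q_{E_{+}}$, so a topology generated by unbounded pseudonorms equals its own unbounded topology by remarks (1) and (10)), your argument is complete.
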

\begin{proof} Suppose that for each $i\in I$, $(E_{i},\tau_{i})$ is an unbounded locally solid Riesz space, and ${\tau}_{i}$ is generated by a family $Q_{i}$ of the unbounded Riesz pseudonorms on $E_{i}$. So for each $q\in Q_{i}$, there exists a Riesz pseudonorm $p$ on $E_{i}$ and $A_{i} \subset E_{i}^{+}$, depending on $q$, such that \begin{center}
 $q(x)=\sup_{a\in A_{i}}p(|x|\wedge a)$ for all $x\in E_{i}$. \end{center} Let $j \in I$ and $q\in Q_{j}$ be given. Choose $p$ and $A_{j}$ as above. Let $P_{j}$ be the projection from $E=\prod_{i}E_{i}$ into $E_{j}$ and $f_{j}$ be vector space embedding of $E_{j}$ into $E$, that is, $f_{j}$ sends $x\in E_{j}$ to $(x_{i})$ where $x_{j}=x$ and $x_{i}=0$ for all $i\not =j$. One can show that for each Riesz pseudonorm  on $E_{j}$, $p\circ P_{j}$ is a Riesz pseudonorm on $E$. We note that for each $q\in Q_{j}$,
\begin{center}$q\circ P_{j}((x_{i}))=q(P_{j}(x_{i}))=q(x_{j})=\sup_{a\in A}p(|x_{j}|\wedge a)=sup_{a\in A_{j}} \hspace{0.1 cm} p\circ P_{j}(|(x_{i})|\wedge f_{j}(a))$.\end{center} Thus, $q\circ P_{j}$ is an unbounded Riesz pseudonorm on $E$. And the the topology of $\prod_{i}E_{i}$ is the topology generated by $\{q\circ P_{j}:j\in I, q\in Q_{j}\}$. Hence, the locally solid Riesz space $\prod_{i}E_{i}$ is an unbounded locally solid Riesz space.

Now suppose that $E=\prod_{i}E_{i}$ is an unbounded locally solid Riesz space, and $i_{0}$ is given. Suppose that the topology of $E$ is generated by the family $Q$ of unbounded Riesz pseudonorm on $E$. Let $q\in Q$ be given. There exists $A=(A_{i})\in E_{+}$ and Riesz pseudonorm $p$ on $E$ such that 
$q(x)=\sup_{a\in A}p(|x|\wedge a)$ for all $x\in E$. It is obvious that for each $i_{0}$, $p\circ f_{i_{0}}$ is a Riesz pseudonorm on $E_{i_{0}}$ and 
\begin{center}$q\circ f_{i_{0}}(x)=\sup_{a\in A_{i_{0}}}p\circ f_{i_{0}}(|x|\wedge a)$.\end{center} Hence $q_{i_{0}}$ is an unbounded Riesz pseudonorm on $E_{i_{0}}$. Now one can show that the topology of $E_{i_{0}}$ is generated by $\{q\circ f_{i_{0}}:q\in Q\}$. Hence, $E_{i_{0}}$ is an unbounded locally solid Riesz space. This completes the proof.
\end{proof}
Let $X$ be a product space of topological spaces $(X_{i})_{i\in I}$. A net $(x_{\alpha})$ converges to $x$ in $X$ if and only if  $x_{\alpha}^{i}\rightarrow x_{i}$ in $X_{i}$  
for each $i\in I$, where $x_{\alpha}=(x_{\alpha}^{i})_{i\in I}$ and $x=(x_{i})$. By using this fact, the proof of the following theorem is easy.
\begin{theorem} Let  $(E_{i},{\tau}_{i})_{i\in I}$ be a family of locally solid Riesz spaces. For each $\mathscr{A}_{i}\subset \mathscr{P}(E_{i}^{+})$, we have
\begin{center}$u<{\prod}_{i}{\tau}_{i},{\prod}_{i}\mathscr{A}_{i}>={\prod}_{i}u<{\tau}_{i},\mathscr{A}_{i}>$.\end{center}
\end{theorem}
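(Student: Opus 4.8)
The plan is to exhibit a common generating family of Riesz pseudonorms for the two locally solid topologies in the statement; equivalently, one could argue at the level of nets using the quoted description of convergence in a product topology. I will assume throughout, in accordance with the standing convention on the symbol $\mathscr{A}$, that each $\mathscr{A}_{i}$ is nonempty and omits $\emptyset$, so that every ``box'' $\mathbf{A}:=\prod_{k\in I}A_{k}$ with $A_{k}\in\mathscr{A}_{k}$ is a nonempty subset of $E^{+}=\prod_{k}E_{k}^{+}$, where $E:=\prod_{i}E_{i}$; the degenerate case in which some $\mathscr{A}_{i}$ contains $\emptyset$ is covered by the earlier remark that such a family produces the discrete topology on both sides.

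First I would fix, for each $i\in I$, a family $(p_{i,j})_{j\in J_{i}}$ of Riesz pseudonorms on $E_{i}$ with $\tau_{i}=<(p_{i,j})_{j\in J_{i}}>$, and recall --- it is immediate from the definition of the product topology, and is the computation carried out in the proof of the preceding theorem --- that $\prod_{i}\tau_{i}$ is generated by the family $\{\,p_{i,j}\circ P_{i}:i\in I,\ j\in J_{i}\,\}$, where $P_{i}:E\to E_{i}$ is the $i$-th coordinate projection. Consequently $u<\prod_{i}\tau_{i},\prod_{i}\mathscr{A}_{i}>$ is generated by the family of unbounded Riesz pseudonorms $(p_{i,j}\circ P_{i})_{\mathbf{A}}$, with $i$, $j$ and $\mathbf{A}\in\prod_{k}\mathscr{A}_{k}$ ranging freely.

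The heart of the matter is a one-line identity. Because the lattice operations in $E$ are computed coordinatewise, $P_{i}(|x|\wedge a)=|x^{i}|\wedge a^{i}$, and hence for a box $\mathbf{A}=\prod_{k}A_{k}$,
\[
(p_{i,j}\circ P_{i})_{\mathbf{A}}(x)=\sup_{a\in\mathbf{A}}p_{i,j}\bigl(|x^{i}|\wedge a^{i}\bigr)=\sup_{b\in A_{i}}p_{i,j}\bigl(|x^{i}|\wedge b\bigr)=(p_{i,j})_{A_{i}}(x^{i})=\bigl((p_{i,j})_{A_{i}}\circ P_{i}\bigr)(x),
\]
where the second equality uses that the coordinates $a^{k}$ with $k\neq i$ are irrelevant to $p_{i,j}\circ P_{i}$, and the nonemptiness of the $\mathscr{A}_{k}$ ($k\neq i$) ensures that, as $\mathbf{A}$ varies, its $i$-th face $A_{i}$ runs over all of $\mathscr{A}_{i}$. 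Thus $u<\prod_{i}\tau_{i},\prod_{i}\mathscr{A}_{i}>$ is generated, as a locally solid topology, by $\{\,(p_{i,j})_{A_{i}}\circ P_{i}:i\in I,\ j\in J_{i},\ A_{i}\in\mathscr{A}_{i}\,\}$.

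Finally, $u<\tau_{i},\mathscr{A}_{i}>$ is by definition generated by $\{\,(p_{i,j})_{A_{i}}:j\in J_{i},\ A_{i}\in\mathscr{A}_{i}\,\}$, so --- again by the description of a product of locally solid topologies --- $\prod_{i}u<\tau_{i},\mathscr{A}_{i}>$ is generated by exactly the family $\{\,(p_{i,j})_{A_{i}}\circ P_{i}\,\}$ displayed above. Since two locally solid topologies defined by the same family of Riesz pseudonorms coincide, this gives the result. If one prefers a net-theoretic phrasing, unravelling both sides shows that $x_{\alpha}\to x$ holds in either topology precisely when $\sup_{b\in A_{i}}p_{i,j}(|x_{\alpha}^{i}-x^{i}|\wedge b)\to 0$ for every $i\in I$, $j\in J_{i}$ and $A_{i}\in\mathscr{A}_{i}$, and a topology is determined by its convergent nets. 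I do not anticipate a genuine obstacle here; the only care needed is the bookkeeping in the ``a box reduces to its $i$-th face'' step and the explicit handling of the degenerate families mentioned at the outset.
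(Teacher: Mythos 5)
Your proposal is correct and follows essentially the route the paper intends: the paper merely remarks that the result is ``easy'' from the fact that convergence in a product topology is coordinatewise, and your key identity $(p_{i,j}\circ P_{i})_{\mathbf{A}}=(p_{i,j})_{A_{i}}\circ P_{i}$ (together with the net-theoretic rephrasing you give at the end) is exactly the computation that makes it so. Your explicit treatment of the degenerate case where some $\mathscr{A}_{i}$ contains $\emptyset$ is a welcome extra care that the paper omits.
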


\subsection{Unbounded absolute weakly locally solid Riesz space} 

The concept of unbounded absolute weak convergence (briefly uaw-convergence) was considered and studied in \cite{omd}. Let $E$ and $F$ be vector spaces. If there exists a bilenear map $T:E\times F\rightarrow\mathbb{R}$ satisfying

$T(x,F)=0\Longrightarrow x=0$, 

$T(E,y)=0\Longrightarrow x=0$, 
then the pair $(E,F)$ is called a dual pair. In this case, $E$ can be considered as a vector supspace of $\mathbb{R}^{F}$, by embedding $x\rightarrow x^{*}$, $x^{*}(y)=T(x,y)$. We can consider ${\mathbb{R}}^{F}$ as a topological space with product topology $\prod_{y\in F}\mathbb{R}$ and restriction of this topology on $E$ is the topology generated by the family $(p_{y})_{y\in F})$ of seminorms, where $p_{y}:E\rightarrow\mathbb{R}$ defined by $p_{y}(x)=|T(x,y)|$. This topology is independent of $T$ and is denoted by ${\sigma}(E,F)$. Similarly, ${\sigma}(F,E)$ can be defined. One of the main results is that the topological dual of $E$ with respect to ${\sigma}(E,F)$ is a vector space  which is isomorphic to $F$, this is denoted by $(E,{\sigma}(E,F))^{'}\cong F$.
\begin{definition} If $(E,F)$ is  a dual pair of Riesz spaces with respect to a positive linear map $T:E\times F\rightarrow\mathbb{R}$, then we call that as a {\bf\emph{positive dual pair}} (with respect to $T$). 
\end{definition}
We note that if $(E,F)$ is a positive dual pair with bilenear map $T$, then one can show that the embedding $x\rightarrow x^{*}$, $x^{*}(y)=T(x,y)$ is bipositive. 

The order dual of a Riesz space $E$ is  the vector space of order bounded functionals from $E$ into $\mathbb{R}$ and denoted by $E^{\sim}$, which is a Dedekind complete Riesz space. Throughout the paper we suppose that $E$ separates its order dual, that is, for each nonzero $x\in E$, there exists $f\in E^{\sim}$ with $f(x)\not =0$.  
So, $(E,E^{\sim})$ is a positive dual pair via the map $(x,f)\rightarrow f(x)$. If ${\tau}$ is a Hausdorff locally solid topology on $E$, then the topological dual $E^{'}$ is an ideal of $E^{\sim}$. Let $A\subset E^{\sim}$ be given. For each $f\in A$, the map $p_{|f|} : E\rightarrow\mathbb{R}$. $p_{|f|}(x)=|f|(|x|)$ is a Riesz seminorm. The locally convex-solid topology generated by $(p_{|f|})_{f\in E^{\sim}}$ is called {\bf\emph{absolute weak topology}} and denoted by $|{\sigma}|(E,A)$.

Now we are going to define an unbounded absolute locally solid topology. For this, first we need the following Lemma.
\begin{lemma} Let $(E,F)$ be a positive dual pair with respect to $T$. For each $a\in E$ and $y\in F$, the map $p:E\rightarrow\mathbb{R}$ defined by
\begin{center}$p(x)=T(|x|\wedge |a|,|y|)$\end{center} is a Riesz pseudonorm on $E$.
\end{lemma}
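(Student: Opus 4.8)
The plan is to verify the five defining conditions of a Riesz pseudonorm for the map $p(x)=T(|x|\wedge|a|,|y|)$ directly, leaning on the positivity and bilinearity of $T$ together with the lattice identities available in any Riesz space. First I would observe that $p$ takes values in $\mathbb{R}^{+}$: since $|x|\wedge|a|\geq 0$ and $|y|\geq 0$, positivity of $T$ gives $p(x)\geq 0$, which is condition (1), and $x=0$ forces $|x|\wedge|a|=0$, hence $p(0)=T(0,|y|)=0$ by bilinearity, which is condition (2). Condition (5) is equally immediate: if $|x|\leq|z|$ then $|x|\wedge|a|\leq|z|\wedge|a|$, so $(|z|\wedge|a|)-(|x|\wedge|a|)\geq 0$, and applying the positive functional $T(\cdot,|y|)$ preserves the inequality, giving $p(x)\leq p(z)$.

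For the triangle inequality (condition (3)) I would first reduce, using $|x_1+x_2|\leq|x_1|+|x_2|$ and the already-established monotonicity, to showing that for $u,v\in E^{+}$ one has $T\big((u+v)\wedge|a|,\,|y|\big)\leq T(u\wedge|a|,|y|)+T(v\wedge|a|,|y|)$. Here the key Riesz-space fact is the inequality $(u+v)\wedge w\leq u\wedge w + v\wedge w$ for $u,v,w\geq 0$ (a standard consequence of the Birkhoff inequalities, e.g. $(u+v)\wedge w\leq (u\wedge w)+(v\wedge w)$ holds in any Riesz space); combining this with positivity of $T(\cdot,|y|)$ yields the claim, and then linearity of $T$ in the first slot splits the right-hand side. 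For condition (4), given $\lambda_n\to 0$ in $\mathbb{R}$ and $x\in E$, I would use $|\lambda_n x|\wedge|a| = (|\lambda_n|\,|x|)\wedge|a| \leq |\lambda_n|\,|x|$ together with monotonicity and positive homogeneity in the first argument to get $p(\lambda_n x)\leq |\lambda_n|\,T(|x|,|y|)\to 0$; strictly, to invoke homogeneity one writes $p(\lambda_n x)=T(|\lambda_n|\,|x|\wedge|a|,|y|)\leq T(|\lambda_n|\,|x|,|y|)=|\lambda_n|\,T(|x|,|y|)$, which is legitimate once we know $|\lambda_n|\,|x|\wedge|a|\leq |\lambda_n|\,|x|$.

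The only genuinely delicate point I anticipate is the triangle inequality, and specifically making sure the auxiliary inequality $(u+v)\wedge w\leq u\wedge w+v\wedge w$ is applied in the correct direction and that each step truly follows from positivity of $T$ rather than from some stronger linearity-and-order hypothesis; everything else is a routine transcription of the corresponding steps in the proof of Theorem 2.1, with $|\cdot|$-evaluation against the fixed positive functional $T(\cdot,|y|)$ playing the role that the lattice norm played there. I would also remark in passing that $p$ is in fact exactly $(p_{|y|})_{\{|a|\}}$ in the notation of Section 3 — that is, the unbounded modification of the Riesz seminorm $x\mapsto T(|x|,|y|)$ with respect to the singleton $\{|a|\}$ — so that the lemma is a special case of the Observation in Section 3 once one knows $x\mapsto T(|x|,|y|)$ is a Riesz pseudonorm; but I would still give the self-contained verification above since it is short and makes the subsequent definition of the unbounded absolute weak topology transparent.
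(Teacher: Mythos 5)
Your proposal is correct and follows essentially the same route as the paper: conditions (1), (2), (5) from positivity of $T$, the triangle inequality via $(|x_1|+|x_2|)\wedge|a|\leq |x_1|\wedge|a|+|x_2|\wedge|a|$ followed by positivity and bilinearity, and condition (4) by bounding $p(\lambda_n x)\leq|\lambda_n|T(|x|,|y|)$. Your handling of (4) is in fact slightly cleaner than the paper's (which factors $|\lambda_n|$ out of the meet and so implicitly assumes $\lambda_n\neq 0$), and your closing remark identifying $p$ with $(p_{|y|})_{\{|a|\}}$ from Section~3 is an accurate and worthwhile observation.
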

\begin{proof} Without loss of the generality, we can suppose that $a$ and $y$ are positive. Obviously the conditions $(1),(2)$ and $(5)$ are satisfied . For the condition $(3)$: for a given pair $x,y \in E$, $p(x+y)=T(|x+y|\wedge a,y)$ \begin{center}
$\hspace*{-1.32 cm}$
$\leq T(|x|+|y|\wedge a,y)$ by positivity 
\end{center}  
\begin{center}
$\hspace*{3.3 cm}$
$\leq T(|x|\wedge a,y)+T(|y|\wedge a,y)$ by bilinearity and positivity
\end{center}
\begin{center}
$\hspace*{-4.64 cm}$
$=p(x)+p(y)$,
\end{center}
hence, the condition $(3)$ holds. For the condition $(4)$, let $\{{\lambda}_{n}\} \subset\mathbb{R}$ be a sequence such that $\lim_{n \rightarrow \infty} \lambda_{n}=0 $ and $x\in E$, we have \begin{center}
$p(\lambda_{n}x)=T(|\lambda_{n}x|\wedge a,y)=T(|{\lambda_{n}}||x|\wedge a,y)=T(|\lambda_{n}|(|x|\wedge {1\over {|\lambda_{n}|}}a),y)$\end{center}\begin{center}$\hspace{7.6 cm}$ $=|{\lambda_{n}}|T(|x|\wedge {1\over {|\lambda_{n}|}}a),y)$\end{center} \begin{center} $\hspace{6.3 cm}$ $\leq  |{\lambda_{n}}|T(|x|,y)$
\end{center}
So, $T(|x|,y)$is a real number,$|{\lambda_{n}}|T(|x|,y)\rightarrow 0$,thus the condition $(4)$also holds.
\end{proof}
By using the same motivation, for a given $A\subset E_{+},e_{0}\in E$ and $f_{0}\in F$, the map $\sup_{a\in A}T(|x|\wedge |e_{0}|,|f_{0}|)$ is also a Riesz Pseudonorm, and it will be denoted by $p_{A,e_{0},f_{0}}$ 
\begin{definition} Let $(E,F)$ be a positive dual pair. Let $E_{0}\subset E$,$F_{0}\subset F$ and $\mathscr{A}\subset \mathscr{P}(E_{+})$ be nonempty sets. Then the topology generated by 
$(p_{A,e_{0},f_{0}})_{{A\in \mathscr{A},e_{0}\in E_{0},f_{0}\in F_{0}}}$ is called unbounded  locally solid Riesz space on the positive pair $(E,F)$ with respect to $E_{0},F_{0}$ and $\mathscr{A}$.
This topology is denoted by $u|{\sigma}|(E,F),E_{0},F_{0},\mathscr{A})$.
\end{definition}
By using some routine arguments, the proof of the above theorem can be given.
\begin{theorem}Let $(E,F)$ be a positive dual pair. Let Let $E_{0}\subset E$,$F_{0}\subset F$ and $\mathscr{A}\subset \mathscr{P}(E_{+})$ be nonempty sets. Then 
\begin{center}$u|{\sigma}|(E,F),E_{0},F_{0},\mathscr{A})=u|{\sigma}|(E,F),I(E_{0}),I(F_{0}),\mathscr{A})$.\end{center}
\end{theorem}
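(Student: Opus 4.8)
The plan is to prove the two topology inclusions separately: one follows at once from $E_{0}\subseteq I(E_{0})$ and $F_{0}\subseteq I(F_{0})$, and the other is a routine ideal--domination estimate of the kind already used in the proof of item~(5) of the remarks in Section~3 and in the proof of the preceding Lemma. Write $\mathcal{T}_{0}$ and $\mathcal{T}_{1}$ for the left-hand and right-hand topologies in the asserted identity, generated respectively by the families $\{p_{A,e,f}:A\in\mathscr{A},\,e\in E_{0},\,f\in F_{0}\}$ and $\{p_{A,e_{0},f_{0}}:A\in\mathscr{A},\,e_{0}\in I(E_{0}),\,f_{0}\in I(F_{0})\}$ of Riesz pseudonorms, recalling that $p_{A,e_{0},f_{0}}(x)=\sup_{a\in A}T(|x|\wedge a\wedge|e_{0}|,|f_{0}|)$.

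Since the generating family of $\mathcal{T}_{0}$ is contained in that of $\mathcal{T}_{1}$, the inclusion $\mathcal{T}_{0}\subseteq\mathcal{T}_{1}$ needs no work. For the reverse inclusion it suffices to show that every generator $p_{A,e_{0},f_{0}}$ of $\mathcal{T}_{1}$ is dominated by a finite positive linear combination of generators $p_{A,e,f}$ of $\mathcal{T}_{0}$; then every basic $\mathcal{T}_{1}$-neighbourhood of $0$ contains a basic $\mathcal{T}_{0}$-neighbourhood of $0$, whence $\mathcal{T}_{1}\subseteq\mathcal{T}_{0}$. Fix $A\in\mathscr{A}$, $e_{0}\in I(E_{0})$ and $f_{0}\in I(F_{0})$; since the pseudonorm depends only on $|e_{0}|$ and $|f_{0}|$, we may take $e_{0},f_{0}\geq 0$. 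By the description of the ideal generated by a set there exist $e_{1},\dots,e_{n}\in E_{0}$, $f_{1},\dots,f_{m}\in F_{0}$ and reals $k,l\geq 0$ with $e_{0}\leq k(|e_{1}|+\dots+|e_{n}|)$ and $f_{0}\leq l(|f_{1}|+\dots+|f_{m}|)$.

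Then, for every $x\in E$ and every $a\in A$, the elementary lattice inequalities $u\wedge(v+w)\leq u\wedge v+u\wedge w$ and $u\wedge kv\leq N(u\wedge v)$ (for $u,v,w\geq 0$, with $N$ the least positive integer satisfying $N\geq k$) give
\[
|x|\wedge a\wedge e_{0}\leq|x|\wedge a\wedge k\sum_{i=1}^{n}|e_{i}|\leq N\sum_{i=1}^{n}\bigl(|x|\wedge a\wedge|e_{i}|\bigr),
\]
and then, applying $T(\,\cdot\,,f_{0})$, which is additive and monotone in its first variable, and afterwards bounding $f_{0}\leq l\sum_{j=1}^{m}|f_{j}|$ with the positivity and bilinearity of $T$ in its second variable,
\[
T\bigl(|x|\wedge a\wedge e_{0},f_{0}\bigr)\leq Nl\sum_{i=1}^{n}\sum_{j=1}^{m}T\bigl(|x|\wedge a\wedge|e_{i}|,|f_{j}|\bigr).
\]
Taking the supremum over $a\in A$ on both sides, and using that a supremum of a finite sum is at most the sum of the suprema, yields $p_{A,e_{0},f_{0}}(x)\leq Nl\sum_{i,j}p_{A,e_{i},f_{j}}(x)$, the domination required; in net language, if $x_{\alpha}\xrightarrow{\mathcal{T}_{0}}x$ then $p_{A,e_{i},f_{j}}(x_{\alpha}-x)\to 0$ for every $i,j$, hence $p_{A,e_{0},f_{0}}(x_{\alpha}-x)\to 0$, so $x_{\alpha}\xrightarrow{\mathcal{T}_{1}}x$. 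I expect the only real nuisance in writing this out to be the bookkeeping of carrying the two independent ideal decompositions --- one in $E$, one in $F$ --- through the estimate simultaneously, together with the mildly fussy scalar rounding $u\wedge kv\leq N(u\wedge v)$; no idea beyond those in the preceding Lemma and in item~(5) of Section~3 is needed, which is exactly why the computation is routine.
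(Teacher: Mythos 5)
Your proof is correct, and it supplies in full the argument the paper itself omits entirely (the paper only remarks that the theorem follows ``by using some routine arguments''); your ideal--domination estimate is precisely the technique of item (5) of Section 3 and of the preceding Lemma, so it is the intended route. The only point worth noting is that you (reasonably) read the paper's defining formula for $p_{A,e_{0},f_{0}}$ as $\sup_{a\in A}T(|x|\wedge a\wedge|e_{0}|,|f_{0}|)$, silently correcting the paper's typo in which the supremand does not depend on $a$.
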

\begin{rem} These observations and results can be extended  into locally solid lattice-ordered groups studied in \cite{hong}.
\end{rem}

\end{document}